\newtheorem{theorem}{Theorem}%[section]
\newtheorem{lemma}{Lemma}%[section]
\newtheorem{claim}{Claim}[theorem]
\newtheorem{problem}{Problem}
\newenvironment{proof}[1][Proof]{\textbf{#1.} }{\ \rule{0.5em}{0.5em}}
\newcommand{\dist}{\mathrm{dist}}
\newcommand{\mad}{\mathrm{mad}}
\author{Maidoun Mortada\footnote{KALMA Laboratory, Faculty of Sciences, Lebanese University, Baalbek, Lebanon.}, Olivier Togni\footnote{LIB Laboratory, Université de Bourgogne, Dijon, France.}}
\begin{document} 
\title{Further Results and Questions on $S$-Packing Coloring of Subcubic Graphs}
\maketitle

\begin{abstract}
For non-decreasing sequence  of integers $S=(a_1,a_2, \dots, a_k)$, an $S$-packing coloring of $G$ is a partition of $V(G)$ into $k$ subsets $V_1,V_2,\dots,V_k$ such that  the distance between any two distinct vertices $x,y \in V_i$ is at least $a_{i}+1$, $1\leq i\leq k$. We consider the $S$-packing coloring problem on subclasses of subcubic graphs: For $0\le i\le 3$, a subcubic graph $G$ is said to be $i$-saturated if every vertex of degree 3 is adjacent to at most $i$ vertices of degree 3. Furthermore, a vertex of degree 3 in a subcubic graph is called heavy if all its three neighbors are of degree 3, and $G$ is said to be $(3,i)$-saturated if every heavy vertex is adjacent to at most $i$ heavy vertices. We prove that every 1-saturated subcubic graph is $(1,1,3,3)$-packing colorable and  $(1,2,2,2,2)$-packing colorable. We also prove that every $(3,0)$-saturated subcubic graph is $(1,2,2,2,2,2)$-packing colorable. %and that every $(3,2)$-saturated subcubic graph is $(1,1,2,2,3)$-packing colorable. 
\end{abstract}

\textbf{Keywords}: graph, coloring, $S$-packing coloring, packing chromatic number, cubic graph, saturated subcubic graph.\\

\section{Introduction}
All graphs considered here are simple graphs. For a graph $G$, the set of vertices of $G$ is denoted by $V(G)$ and its set of edges by  $E(G)$. For a vertex $x\in V(G)$,  $N(x)$ denotes the set of neighbors of $x$ and $d(x)$ denotes the number of neighbors of $x$.  We denote by $\Delta(G)$ and $\delta(G)$ the maximum and minimum degree, respectively. For two vertices $x$ and $y$ in $G$, we say $y$ is a {\em second neighbor} of $x$ if $y\notin N(x)$ but $y\in N(z)$ for some $z\in N(x)$.   A graph $G$ is {\em subcubic} if $\Delta(G)\leq 3$ and {\em cubic} if for any vertex $x$, $d(x)=3$. A vertex $x$ in a subcubic graph is said to be an $i$-vertex, $0\leq i\leq 3$, if $d(x)=i$. Let $H\subseteq  V(G)$, we denote by $G[H]$ the subgraph induced by $H$. For a path $P=x_1 \dots x_n$, we call $x_1$ and $x_n$ the {\em ends} of $P$, while each other vertex is called an {\em interior} vertex. A path $P$ in a graph $G$ is said to be maximal if $P$ is not a subpath of any other path in $G$.  The length of a shortest path in $G$ joining two vertices  $x$ and $y$ is the distance between  $x$ and $y$ in $G$ and it is denoted by $\dist(x,y)$. For a graph $G$, the {\em subdivision} of $G$, denoted by $S(G)$ is the graph obtained from $G$ by replacing each edge with a path of length two.

Let $G$ be a subcubic graph. For $0\le i\le 3$, a vertex $x$ in $G$ is said to be an \emph{$i$-vertex} if $d(x)=i$. A 3-vertex in $G$ is said to be a \emph{heavy} vertex if all its neighbors are 3-vertices. In \cite{26,27}, the authors classify the subcubic graphs into four classes: For $0\le i\le 3$, $G$ is said to be \emph{$i$-saturated} if every 3-vertex in $G$ is adjacent to at most $i$ 3-vertices. Note that a $0$-saturated subcubic graph is also called  3-irregular in~\cite{16,a}.
For the class of 3-saturated subcubic graphs, the authors in \cite{26} consider, for $0\le i\le 3$, the subclass of \emph{$(3,i)$-saturated} subcubic graphs, which consists of the 3-saturated subcubic graphs such that every heavy vertex is adjacent to at most $i$ heavy vertices. We point out that the above definition of saturation is slightly modified from the original one given in~\cite{26,27}, in order to have a natural inclusion scheme. Remark also that any subcubic graph is $(3,3)$-saturated.

For a sequence  of positive integers $S=(a_1,a_2,\dots, a_k)$  with $a_1\leq a_2\leq \dots \leq a_k$, an {\em $S$-packing coloring} of a graph $G$ is a partition of $V(G)$ into  subsets $V_1, V_2, \dots, V_k$ such that for every two distinct vertices $x$ and $y$  in $V_i$,  $\dist(x,y)\geq a_i+1$ for $1\le i\le k$. The smallest $k$ such that $G$ is $(1,2, \dots, k)$-packing colorable is called the \emph{packing chromatic number} of $G$ and is denoted by $\chi_{\rho}(G)$. This parameter was introduced by Goddard et al.~\cite{d} under the name of broadcast chromatic number. Since then, it has been studied extensively, see the survey paper of Bre\v{s}ar, Ferme, Klav\v{z}ar and Rall~\cite{bfk}. For better visibility, we may use exponents in a sequence to denote the repetition of an integer, e.g., $(1^2,2^3)=(1,1,2,2,2)$.

The class of subcubic graphs appears to be attractive for the packing and $S$-packing coloring problems. Balogh, Kostochka and Liu~\cite{bkl} and Bre\v{s}ar and Ferme~\cite{bf} independently proved that the packing chromatic number is not bounded on the class of subcubic graphs. Many papers~\cite{3,11,8,9,16,24,25} were devoted to finding bounds on $\chi_{\rho}(G)$ and $\chi_{\rho}(S(G))$ for subcubic graph subclasses, or finding sequences $S$ for which the graphs in the class are $S$-packing colorable.
For $S$-packing coloring of subcubic graphs, the following problems reveal to be challenging:
\begin{problem}[\cite{16,9}]
\begin{enumerate}
\item Is every subcubic graph except the Petersen graph $(1,1,2,2)$-packing colorable?
\item Is every subcubic graph except the Petersen graph $(1,2^5)$-packing colorable?
\item Does every subcubic graph $G$ satisfy $\chi_{\rho}(S(G)) \leq 5$?
\end{enumerate}

\end{problem}

  \begin{figure}[h]
\begin{center}
\begin{tikzpicture}[scale=1]
\node at (2,2) (a) [circle,draw=black,fill=none,scale=0.7]{};
\node at (2*.866,1+2) (b) [circle,draw=black,fill=none,scale=0.7]{};
\node at (1,2*.866+2) (c)[circle,draw=black,fill=none,scale=0.7]{};
\node at (0,4) (d) [circle,draw=black,fill=none,scale=0.7]{};
\node at (-1,2*.866+2) (e) [circle,draw=black,fill=none,scale=0.7]{};
\node at (-2*.866,1+2) (f)[circle,draw=black,fill=none,scale=0.7]{};
\node at (-2,2) (g) [circle,draw=black,fill=none,scale=0.7]{};
\node at (-2*.866,1) (h) [circle,draw=black,fill=none,scale=0.7]{};
\node at (-1,-2*.866+2) (i) [circle,draw=black,fill=none,scale=0.7]{};
\node at (0,0) (j) [circle,draw=black,fill=none,scale=0.7]{};
\node at (1, 2-2*.866) (k) [circle,draw=black,fill=none,scale=0.7]{};
\node at (2*.866,1) (l) [circle,draw=black,fill=none,scale=0.7]{};
\draw (a) -- (b) -- (c) -- (d) -- (e) -- (f) -- (g) -- (h) -- (i) -- (j) -- (k) -- (l) -- (a);
\draw (c) .. controls (0,3.5) .. (e);
\draw (g) .. controls(-1.2,1.2) .. (i);
\draw (a) .. controls(1.2,1.2) .. (k);

\node at (7,4-.2) (s) [circle,draw=black,fill=none,scale=0.7]{};
\node at (8,4-.2) (t) [circle,draw=black,fill=none,scale=0.7]{};
\node at ( 7.5, 3.3-.2 ) (r) [circle,draw=black,fill=none,scale=0.7]{};
\node at ( 6.1 , 1-.5 ) (v) [circle,draw=black,fill=none,scale=0.7]{};
\node at ( 5.6 , 1.7 -.5) (w) [circle,draw=black,fill=none,scale=0.7]{};
\node at ( 6.5 , 1.8-.5) (u) [circle,draw=black,fill=none,scale=0.7]{};
\node at ( 8.9 , 1-.5) (y) [circle,draw=black,fill=none,scale=0.7]{};
\node at ( 9.5 , 1.7-.5) (z) [circle,draw=black,fill=none,scale=0.7]{};
\node at ( 8.6 , 1.8-.5) (x) [circle,draw=black,fill=none,scale=0.7]{};
\node at ( 7.5, 2.1-.5) (c') [circle,draw=black,fill=none,scale=0.7]{};
\node at ( 7.5, 1-.5) (b') [circle,draw=black,fill=none,scale=0.7]{};
\node at ( 7.5, 2.7-.35) (h') [circle,draw=black,fill=none,scale=0.7]{};
\draw (s) -- (t) -- (r) -- (s);
\draw (v) -- (w) -- (u) -- (v);
\draw (x) -- (y) -- (z) -- (x);
\draw (s) -- (w);
\draw (t) -- (z);
\draw (y) -- (b') -- (v);
\draw (r) -- (h') -- (c') -- (u);
\draw (x) -- (c');
\end{tikzpicture}
\end{center}
\caption{A non $(1,1,4,4)$-packing colorable $1$-saturated graph (on the left) and a non $(1,1,3,3)$-packing colorable $(3,2)$-saturated subcubic graph (on the right).}
\label{fig2}
\end{figure}
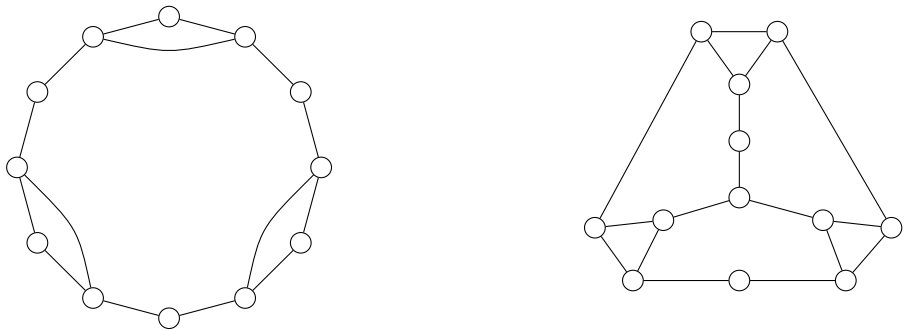

There is a natural link between Problem 1.1 and Problem 1.3 since Gastineau and Togni \cite{16} proved that in order for a subcubic graph $G$ to have $\chi_{p}(S(G)) \leq 5$ it is enough for $G$ to be $(1, 1, 2, 2)$-packing colorable. For Problem 1.3, Balogh, Kostochka and Liu~\cite{3} proved that the subdivision of any subcubic graph has packing chromatic number at most 8.  
Problem 1 was confirmed true for subclasses of subcubic graphs:  Yang and Wu \cite{a} proved that every 3-irregular subcubic graph is $(1,1,3)$-packing colorable and then a simpler proof for the same result was presented in  \cite{mai}. 
%Specific classes of subcubic graphs were studied in ~\cite{9, m} and it was proved that such graphs are $(1,1,2,2)$-packing colorable. 
Problem 1.1 was solved for generalized prism of cycles by Bre\v{s}ar, Klav\v{z}ar, Rall and Wash~\cite{9} and for subcubic graphs of maximum average degree ($\mad$) less than $30/11$ by Liu, Liu, Rolek, and Zhu~\cite{m}.
Moreover,  Tarhini and Togni \cite{i} proved that  every cubic Halin graph is $(1, 1, 2, 3)$-packing colorable. Recently, Mortada and Togni ~\cite{26,27} proved that every 1-saturated subcubic graph is $(1,1,2)$-packing colorable, every $(3,0)$-saturated subcubic graph is  $(1,1,2,2)$-packing colorable, every 2-saturated subcubic graph is $(1,1,2,3)$-packing colorable. Finally, even more recently, Liu, Zhang and Zhang~\cite{LZZ24} proved that every subcubic graph is $(1,1,2,2,3)$-packing colorable, and hence that the packing chromatic number of the subdivision of any subcubic graph is at most $6$.

We may remark here that even if the saturation properties and the maximum average degree are somewhat linked, there are $(3,0)$-saturated graphs having maximum average degree greater than $30/11$, hence for which Liu et al.'s result~\cite{m} does not apply. For instance, let $G$ be obtained by taking the prism of a triangle and, for two edges lying in the two different triangles and not on a common 4-cycle, subdividing these two edges once. Then $G$ is $(3,0)$-saturated, has average degree $22/8$ and thus $\mad(G) \ge 22/8 > 30/11$.

 \begin{table}[h]
 \centering
\begin{tabular}{|p{3.1cm}||l|l|l|}\hline
 Subcubic class & Positive sequences & Negative sequences & Open sequences\\\hline\hline
 Arbitrary & $(1,1,2,2,3)$~\cite{LZZ24} & $(1,1,2,3)$ for $P$ & $(1,1,2,2)$\\
($(3,3)$-saturated) & $(1,2^6)$~\cite{16} & $(1,2^5)$  for $P$ & $(1,2^5)$ except $P$\\\hline
 $(3,2)$-saturated & $(1,1,2,2,3)$~\cite{LZZ24} & $\mathbf{(1,1,3,3)}$ Fig.~\ref{fig2} & $(1,1,2,2)$ \\
 & $(1,2^6)$~\cite{16} &  $\mathbf{(1,2^3)}$ Fig.~\ref{Fig1} &  $(1,2^4)$\\\hline
 $(3,1)$-saturated & $(1,1,2,2,3)$~\cite{LZZ24} & $\mathbf{(1,1,4,4)}$  Fig.~\ref{fig2} & $(1,1,2,2)$ \\
 & $(1,2^6)$~\cite{16} &  $\mathbf{(1,2^3)}$ Fig.~\ref{Fig1} &  $(1,2^4)$\\\hline
 $(3,0)$-saturated & $(1,1,2,2)$~\cite{26}& $\mathbf{(1,1,4,4)}$ Fig.~\ref{fig2}& $(1,1,2,3)$\\
  &  $\mathbf{(1,2^5)}$~Thm \ref{thm5}  &  $\mathbf{(1,2^3)}$ Fig.~\ref{Fig1} &  $(1,2^4)$\\\hline\hline
 $2$-saturated & $(1,1,2,3)$~\cite{27} & $\mathbf{(1,1,4,4)}$ Fig.~\ref{fig2} & $(1,1,2,4)$\\
 & $\mathbf{(1,2^5)}$~Thm \ref{thm5} &  $\mathbf{(1,2^3)}$ Fig.~\ref{Fig1} &  $(1,2^4)$\\\hline
 $1$-saturated & $(1,1,2)$~\cite{26}  & $(1,1,3)$ ~\cite{26}& $(1,1,3,4)$\\
                & $\mathbf{(1,1,3,3)}$ Thm~\ref{thm1} & $\mathbf{(1,1,4,4)}$ Fig.~\ref{fig2}&  \\
                & $\mathbf{(1,2^4)}$ Thm~\ref{thm4} & $\mathbf{(1,2^3)}$ Fig.~\ref{Fig:1sat} &\\\hline
 $0$-saturated  & $(1,1,3)$~\cite{a} & $\mathbf{(1,1,4)}$ for $2K_3^*$ & $(1,2,2,3)$  \\
 ($3$-irregular)& $(1,2^3)$~\cite{16}& $\mathbf{(1,2,2)}$ for $S(K_4)$& \\\hline
\end{tabular}
\caption{Known results for $S$-packing coloring of $i$-saturated subcubic graphs. The results of this paper are in bold. By a positive sequence, we mean a sequence $S$ for which every graph in the class is $S$-packing colorable; a negative sequence is a sequence $S$ for which there exists a graph in the class that is not $S$-packing colorable; an open sequence is a sequence for which we do not know if it is positive or negative. $P$ is the Petersen graph and $2K_3^*$ is the graph obtained by joining two $K_3$ by a path of length two.}
 \label{tbl1}
 \end{table}

In this paper, we continue the exploration of the $S$ packing coloring problem on subcubic graph subclasses by finding new results for saturated subcubic graphs. The technique used to prove our latest results \cite{26,27} seems powerful as it allows us in this article to prove that every 1-saturated subcubic graph is $(1,1,3,3)$-packing colorable.  The technique is based on considering an independent set in a 1-saturated subcubic graph $G$ that maximizes, among all independent sets, a linear combination of the number of 3-vertices with one neighbor of degree three, the number of 3-vertices with no neighbor of degree three,  and the number of 2-vertices. Considering such an independent set allows us to determine the distance between a sufficient number of vertices in $G$, leading at the end to the desired packing coloring of $G$.  
Moreover, we prove in Section 3 that every 1-saturated subcubic graph is $(1,2^4)$-packing colorable, and every $(3,0)$-saturated subcubic graph is $(1,2^5)$-packing colorable. %Finally, we prove in Section 4 that every $(3,2)$-saturated subcubic graph is $(1,1,2,2,3)$-packing colorable. 
Table~\ref{tbl1} summarizes the results and questions concerning $S$-packing coloring of the above subcubic graph subclasses.

\section{$(1,1,3,3)$-Packing Coloring of 1-Saturated Subcubic Graphs}

In this section, we prove our main result concerning the 1-saturated subcubic graphs. %Note that we will interchangeably use the terms independent set and independent set.

\begin{theorem}\label{thm1}
Every 1-saturated subcubic graph is $(1,1,3,3)$-packing colorable.
\end{theorem}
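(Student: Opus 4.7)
The plan is to adapt the weighted-independent-set technique that the authors developed for their earlier result that every $1$-saturated subcubic graph is $(1,1,2)$-packing colorable \cite{26}, and push it to provide two distance-$3$ packings. Let $G$ be a $1$-saturated subcubic graph. Call a $3$-vertex \emph{paired} if it is adjacent to another $3$-vertex (the unique one, since $G$ is $1$-saturated), and \emph{lonely} otherwise. I would select an independent set $I$ of $G$ that maximizes a weighted count $\alpha\cdot n_{3,1}(I) + \beta\cdot n_{3,0}(I) + \gamma\cdot n_2(I)$ of its paired $3$-vertices, lonely $3$-vertices, and $2$-vertices, for a suitable triple $\alpha>\beta>\gamma>0$. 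Such an $I$ is necessarily a maximal independent set, so every vertex outside $I$ has a neighbor in $I$; consequently $H := G - I$ has maximum degree at most $2$, hence is a disjoint union of paths and cycles. The weight-maximization also rules out several local configurations via swap arguments (replace a vertex of $I$ by one or two of its neighbors and compare the weights), and these forbidden configurations will drive the whole argument.

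The next step is to set $V_1 := I$ and build $V_2,V_3,V_4$ from $V(H)$. On each path and each even cycle of $H$, the two classes of the natural proper $2$-coloring are disjoint independent sets of $G$; I would place one class into $V_2$ and collect the other into a pool $R$ that will later be split between $V_3$ and $V_4$. Odd cycles of $H$ are the principal nuisance: no proper $2$-coloring exists, so at least one vertex must be moved from its expected $V_2$-slot into $R$. The structural constraints coming from the weighted choice of $I$ should guarantee that each such displaced vertex has a $3$-vertex neighbor in $I$ with enough free space around it, so that its placement into $V_3$ or $V_4$ does not conflict with other members of $R$.

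Finally, I would partition $R$ into the two distance-$3$ packings $V_3$ and $V_4$. Build an auxiliary conflict graph on $R$ whose edges join pairs at distance at most $3$ in $G$; a proper $2$-coloring of this auxiliary graph yields the desired packings. Here one exploits the $1$-saturation: a short path of $G$ between two $R$-vertices forces both endpoints to lie close to $I$, and the weight-maximization prevents too many such conflicts from piling up on a single $R$-vertex. I expect to show that each connected component of the auxiliary graph is either an isolated vertex, an edge, or a short even cycle, and in particular is bipartite.

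The main obstacle, as is typical for this approach, lies in the case analysis triggered by the weight-maximization: one must enumerate the surviving components of $H$ (short cycles, paths whose endpoints share a neighbor in $I$, odd cycles adjacent to paired $3$-vertices, and so on) and, for each of them, reshuffle the greedy assignment slightly so as to simultaneously preserve the independence of $V_2$ and the distance-$3$ condition for $V_3$ and $V_4$. Choosing $\alpha,\beta,\gamma$ with sufficiently separated magnitudes so that every potentially bad local swap strictly decreases the weight of $I$ is what makes this enumeration finite and tractable, and it is where the bulk of the technical work will be concentrated.
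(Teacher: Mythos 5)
Your plan follows the same road map as the paper's proof: a weighted maximum independent set $I$ chosen via $\alpha\,n_{3,1}+\beta\,n_{3,0}+\gamma\,n_2$, swap arguments to forbid local configurations, $V_1=I$, a second independent class inside $G-I$, and two distance-$3$ packings for the leftovers. However, the proposal stops short precisely where the proof's content lies, and one of your structural worries is misplaced while a genuinely hard step is waved at. On the first point: with suitable weights (the paper uses $1,\;0.7,\;0.35$) a single swap argument shows that no two $3$-vertices of $G-I$ are adjacent. Since every interior vertex of a path of $G[V\setminus I]$ has two neighbours outside $I$ and at least one inside $I$, it is a $3$-vertex; combined with the previous sentence this forces every component of $G-I$ to be a path of length at most two, and in particular \emph{cycles do not occur at all}. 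The odd-cycle nuisance you plan to fight is vacuous, but you do need to know that, and you also need to send the \emph{middle} vertex of each length-two path (a single vertex) to the pool $R$ rather than the two ends: the ends of such a path are at distance two from each other and from ends of other paths through common $I$-neighbours, and dumping a whole colour class of each component into $R$ makes the conflict graph far too dense to be $2$-colourable in general.

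The genuine gap is your final sentence of the third paragraph: ``I expect to show that each connected component of the auxiliary graph is an isolated vertex, an edge, or a short even cycle.'' This is the theorem-shaped hole. Establishing it requires more than the primary weight maximization: the paper needs a \emph{secondary} tie-breaking criterion (among all maximum-weight independent sets, minimize the number of length-one paths of $G-I$ having a $3$-vertex end), because several of the critical swaps only preserve the weight and the contradiction comes from the tie-breaker; it then needs a careful choice of the transversal of the paths (maximizing the number of ``sibs'', i.e.\ pool vertices sharing a father in $I$ with another pool vertex) and a further exchange replacing lonely $3$-vertices of the transversal by their degree-two path-partners, so that every residual conflict is either a matched sibling pair or a distance-three conflict between such a pair and a lonely vertex that can always be broken by picking the right member of the pair. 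Without this machinery, a single pool vertex can a priori conflict with several others at distance exactly three (not only through common neighbours), and nothing in the single weighted optimization rules out an odd component in your conflict graph. So the approach is right, but the proposal defers the entire quantitative heart of the argument --- the choice of tie-breakers and the distance claims they buy --- to ``I expect to show''.
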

\begin{proof}
On the contrary, suppose that $G$ is a counter-example with the minimum order.  Clearly, $G$ is connected. First, $\delta(G)\geq 2$, since otherwise, let $u$ be a vertex of degree one and let $G'=G-u$. By the minimality of $G$, $G'$ has  a $(1, 1, 3,3)$-coloring. Either $1_a$ or $1_b$ is not the color of the unique neighbor of $u$ in $G'$, then give this color to $u$, and so we obtain a $(1,1,3,3)$-coloring of $G$, a contradiction.\\

\noindent Our plan is to partition the set of vertices of $G$ into four subsets on which two of them are independent, and any two vertices in the third (resp. fourth) subset are at distance at least four. The existence of such a partition proves that $G$ is $(1,1,3,3)$-packing colorable, which is a contradiction. To reach this partition, we will first consider a special independent set that will lead to determining the distance between specific vertices in $G$.\\

\noindent Note  that if $H$ is a subgraph of $G$ or a subset of $V(G)$ and if $x$ is a vertex in $H$, by saying $x$ is an $i$-vertex, we mean that $x$ is an $i$-vertex in $G$, $2\leq i\leq 3$. That is, maybe $x$ does not have $i$ neighbors in $H$ but has them in $G$. 
%Moreover, by saying $u$ is an $i$-neighbor of $v$, we mean that $u$ is a neighbor of $v$ and $u$ is an $i$-vertex, $2\leq i\leq 3$. \\

\noindent Let $T$ be an independent set in $G$, we define the following three sets partitioning $T$:\\
$X_1(T)=\{x\in T:\text{ $x$ is a 3-vertex and $x$ has a neighbor of degree three}\}$,\\
$X_0(T)=\{x\in T:\text{ $x$ is a 3-vertex and $x$ has  no neighbor of degree three}\}$, and\\
$Y(T)=\{x\in T :\text{$x$ is a 2-vertex}\}$.\\ \noindent Let $\phi(T)=|X_1(T)|+0.7|X_0(T)|+0.35|Y(T)|$ and let $\overline{T}$ denotes the set $V(G)\setminus T$.   An independent set $T$ is said to be a {\em maximum weighted} independent set if $\phi(T)\geq \phi(K)$ for every independent set $K$.
%$$\phi(T)=max \{\phi (K), \; K\; \text{is an independent set}\}$$. 
\noindent Let $S$ be a maximum weighted independent set.
 Clearly, by the maximality of $\phi(S)$, each vertex in $\overline{S}$ has a neighbor in $S$.  Thus, any interior vertex of a path in $G[\overline{S}]$ is a 3-vertex. We first present this result:

\begin{claim}\label{cl1.1}
If $u$ and $v$ are two $3$-vertices in $\overline{S}$, then $u$ and $v$ are not adjacent.
\end{claim}
\begin{proof}
Suppose $u$ and $v$ are adjacent, then each neighbor  of $u$ (resp. $v$) in $S$  is a 2-vertex. Consequently,  $S'=(S\setminus N(u))\cup \{u\}$ is an independent set with $\phi (S')>\phi (S)$, a contradiction.
\end{proof}

\noindent As a result of the above claim, we can deduce that $G[\overline{S}]$ contains no cycle.
 Moreover, since any interior vertex of a path in $G[\overline{S}]$ is a 3-vertex, we can distinguish only four  types of maximal paths in $G[\overline{S}]$ (see Figure~\ref{fig:types}): 
 \begin{itemize}
 \item A maximal path of length zero, and this type will be denoted by $\mathcal{P}_0$.

     \item A maximal path of length one and its end vertices are 2-vertices, and this type will be denoted by $\mathcal{P}_1$.
     \item A maximal path of length one and its end vertices are a 2-vertex and a 3-vertex, and this type will be denoted by $\mathcal{P}_2$.
     \item A maximal path of length two, and this type will be denoted by $\mathcal{P}_3$. 
 \end{itemize}
 Remark that any two vertices in $\overline{S}$, which are not on the same maximal path, are not adjacent. Moreover, by Claim~\ref{cl1.1}, the end vertices of a maximal path of type $\mathcal{P}_3$ are 2-vertices, while the interior one is a 3-vertex.\\

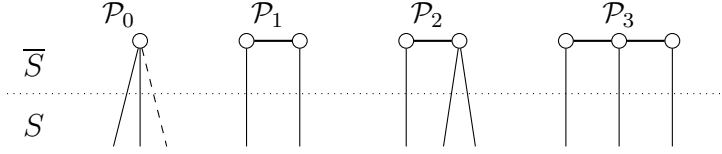
\begin{figure}[h]
\begin{center}
\begin{tikzpicture}[scale=.7]
\node at (-1,2.6) {$\overline{S}$};
\draw[dotted] (-1.5,2)--(12,2);
\node at (-1,1.4) {$S$};
\node at (1,3) (a) [circle,draw=black,fill=none,scale=0.5]{};\node at (.6,3.5) {\small $\mathcal{P}_0$};
\draw (1,1)-- (a) -- (0.5,1); \draw[dashed] (a) -- (1.5,1);

\node at (3,3) (b) [circle,draw=black,fill=none,scale=0.5]{};\node at (3.4,3.5) {\small $\mathcal{P}_1$};
\node at (4,3) (c) [circle,draw=black,fill=none,scale=0.5]{};
\draw (3,1)-- (b);  \draw[thick] (b) -- (c); \draw (c) -- (4,1); 

\node at (6,3) (d) [circle,draw=black,fill=none,scale=0.5]{};\node at (6.4,3.5) {\small $\mathcal{P}_2$};
\node at (7,3) (e) [circle,draw=black,fill=none,scale=0.5]{};
\draw (6,1)-- (d);  \draw[thick] (d) -- (e); \draw (7.3,1) -- (e) -- (6.7,1); 
 
\node at (9,3) (h) [circle,draw=black,fill=none,scale=0.5]{};\node at (10,3.5) {\small $\mathcal{P}_3$};
\node at (10,3) (i) [circle,draw=black,fill=none,scale=0.5]{};
\node at (11,3) (j) [circle,draw=black,fill=none,scale=0.5]{};
\draw (9,1)-- (h); \draw (i) -- (10,1); \draw[thick] (h) -- (i) -- (j); \draw (j) -- (11,1); 
\end{tikzpicture}
\end{center}
\caption{The different types of maximal paths in $G[\overline{S}]$.}
 \label{fig:types}   
\end{figure}

  \noindent The coloring procedure will be to color the vertices of $S$ by color $1_a$ and as many as possible vertices of $\overline{S}$ by color $1_b$ in such a way that there will remain at most one uncolored vertex for each maximal path of type $\mathcal{P}_1$, $\mathcal{P}_2$ and $\mathcal{P}_3$. We are going to use colors $3_a$ and $3_b$ for these vertices. %However, we will also recolor some of the vertices of $S$ in order to have more flexibility in coloring particular vertices of $\overline{S}$.
  \\

\noindent In $G[\overline{S}]$, we call  {\em bad 2-vertex} each 2-vertex  on a maximal path of type $\mathcal{P}_1$, $\mathcal{P}_2$ and $\mathcal{P}_3$, and  {\em bad 3-vertex} each 3-vertex on a maximal path of type $\mathcal{P}_2$ and $\mathcal{P}_3$. A bad 3-vertex is said to be a {\em weak bad} 3-vertex if it is a vertex on a path of type $\mathcal{P}_2$, and {\em mid bad} 3-vertex if it is on a path of type $\mathcal{P}_3$. For abbreviation, we use bad vertex for any of the previously defined vertices. Let $u$ be a vertex in $G$, we call {\em bad neighbor} of $u$ every bad vertex adjacent to $u$. \\

\noindent For a maximum weighted independent set $T$, we denote by  $\theta(T)$  the number of maximal paths of type $\mathcal{P}_2$ in $G[\overline{T}]$. We will assume that our maximum weighted independent set $S$ was chosen such that  $\theta (S)\leq \theta (T)$ for every maximum weighted independent set $T$. \\

 \noindent If $u\in S$ is adjacent to $v\in \overline{S}$, then we say $u$ is a {\em father} of $v$. Two vertices $u$ and $v$ in $\overline{S}$ are said to be {\em siblings} if $uv \notin E(G)$, and they have a common father in $S$. In this case, we say $u$ is a sibling of $v$. Moreover,  we say $u$ is a {\em bad sibling} of $v$ if $u$ is a bad vertex. Since each sibling of a vertex $u$ is at distance two from $u$, we found that in order to study the distances between bad vertices, it is important to count the number of bad siblings of each bad vertex. We have the following result:
 \begin{claim}\label{cl1.2}
Each bad mid vertex has no bad sibling. Besides, each bad $2$-vertex and weak bad $3$-vertex has at most one bad sibling. 
 \end{claim}
 \begin{proof} 
 Let $u$ be a bad mid vertex and suppose that $u$ has a bad sibling $v$.  Clearly, $u$ has a unique father in $S$, say $x$.  Then, $x$ is a father of $v$. Moreover, $x$ is a 3-vertex since otherwise $S'=(S\setminus \{x\})\cup \{u\}$ is an independent set with $\phi (S')>\phi (S)$, a contradiction. Consequently,  $v$ is a 2-vertex. Thus, $S'=(S\setminus \{x\})\cup \{u,v\}$ is an independent set with $\phi (S')>\phi (S)$, a contradiction. \\
\noindent  We still need to prove each bad 2-vertex and weak bad 3-vertex has at most one bad sibling. Suppose to the contrary that there exists a bad $i$-vertex $u$, $i\in \{2,3\}$,  and two other bad vertices $v$ and $w$ such that $v$ and $w$ are siblings of $u$.  We will consider the cases concerning the nature of $u$:
 \begin{enumerate}
 \item $u$ is a 2-vertex.\\
 \noindent Since $u$ in this case has a unique father in $S$, then $u$, $v$ and $w$ have a common father, say $x$. As $x$ is a 3-vertex, then at most one of $v$ and $w$ is a 3-vertex. Note that if any of $v$ and $w$ have a neighbor in $S$ distinct from $x$ then this neighbor is a 2-vertex. Consequently, $S'=(S\setminus (N(v)\cup N(w))\cup \{u,v,w\}$ is an independent set with $\phi (S')>\phi (S)$, a contradiction. 
 
 \item $u$ is a 3-vertex.\\
 \noindent We need here to study two cases: $u$, $v$ and $w$ have a common father, and the case when the common father of $u$ and $v$ is distinct from that of $u$ and $w$. For the first case, let $x$ be the common father. Since  $x$ is a 3-vertex and $G$ is 1-saturated, then $v$ and $w$ are both 2-vertices and the other neighbor of $u$, distinct from $x$, in $S$ is also a 2-vertex. Consequently, $S'=(S\setminus N(u))\cup \{u,v,w\}$ is an independent set with $\phi (S')>\phi (S)$, a contradiction. For the other case, let $x$ be the common father of $u$ and $v$ and $y$ that of $u$ and $w$. Clearly, either $x$ or $y$ is a 2-vertex. Without loss of generality, suppose that $x$ is a 2-vertex. For the case $y$ is a 2-vertex and both $v$ and $w$ are 3-vertices such that the neighbor of $v$ (resp. $w$)  in $S$, distinct from $x$ (resp. $y$), is a 3-vertex, we get $S'=(S\setminus (N(v)\cup N(w)))\cup \{u,v,w\}$ is an independent set with $\phi (S')= \phi (S)$ but $\theta(S')< \theta(S)$, a contradiction. In fact, the neighbors of $v$ and $w$ in $S$, which are distinct from $x$ and $y$, are both bad mid vertices in $G[\overline{S'}]$ and this means that the maximal path containing each of these vertices in $G[\overline{S'}]$ is a path of type $\mathcal{P}_3$ and not $\mathcal{P}_2$, while the path to which $u$ (resp. $w$ and $v$) belongs in $S$ is a maximal path of type $\mathcal{P}_2$. For the case $y$ is a 3-vertex and $v$ is a 3-vertex such that $v$ has a neighbor in $S$, distinct from $x$, say $z$, which is a 3-vertex, we get $S' = (S \setminus (N(u) \cup N(v))) \cup \{u,v,w\}$ is an independent set with $\phi(S') = \phi(S)$ but $\theta (S') < \theta(S)$, a contradiction. In fact, the maximal path to which $z$ belongs in $G[\overline{S'}]$ is of type $\mathcal{P}_3$, while the maximal paths to which $u$ and $v$ belong in $G[\overline{S}]$ are of type $\mathcal{P}_2$. 
 For the remaining cases, whatever the nature of $v$, $w$ and $y$, we can prove in all cases that $S' = (S \setminus (N(v) \cup N(w))) \cup \{u,v,w\}$ is an independent set with $\phi(S') > \phi(S)$, a contradiction.
   \end{enumerate}\end{proof}\\

 \noindent  Let $u$ be a bad vertex having a sibling which is also a bad vertex, then $u$ is called a {\em sib.} For abbreviation, a {\em $2$-sib} (resp {\em $3$-sib}) is a  $2$-vertex sib (resp. $3$-vertex sib).  We call {\em bad set} of $\overline{S}$ each subset of $\overline{S}$  such that each of its vertices is either a bad 2-vertex which is not on a path of type $\mathcal{P}_3$ or a weak bad 3-vertex, and for every maximal path  $P$ of type $\mathcal{P}_1$ or  $\mathcal{P}_2$ in $G[\overline{S}]$, exactly one vertex of $P$ is in this set.   If $u$ is a bad $i$-vertex, $2\leq i\leq 3$, in a bad set $W$ and $u$ has no bad sibling in $W$, then $u$ is said to be a {\em lonely vertex of}  $W$. Otherwise, $u$ is said to be an {\em $i$-sib of $W$} (or {\em sib of W} for abbreviation). For a bad set $W$, we denote by $\gamma(W)$, the number of sibs of $W$.
 % A father in $S$ is called a {\em bad $2$-father of}  $W$ if it is a common father of two  $2$-sibs of $W$, a {\em bad $3$-father of}    $W$ if it is a common father of two  3-sibs of $W$, and a {\em bad mixed father of}   $W$ if it is a common father of a 2-sib and a 3-sib of $W$. For abbreviation, we mean by bad father of $W$ any of the previous three types of fathers. Note that each bad 2-father is a 3-vertex. In fact, let $u$ and $v$ be two 2-sibs of $W$ with common father $x$, then if $x$ is a 2-vertex we get $S'=(S\setminus \{x\})\cup \{u,v\}$ is an independent set with $\phi (S')>\phi (S)$, a contradiction. Moreover, any bad 3-father of $W$ is a 2-vertex since $G$ is 1-saturated.\\
  \noindent Let $B$ be a bad set of $\overline{S}$ such that $\gamma(B)$ is maximum. We have these important results concerning the distances between bad vertices and lonely vertices of $B$:\\

\begin{claim}\label{cl1.3}
Let $u$ and $v$ be two lonely vertices of $B$, then the bad neighbor of $u$ (resp.  $v$) in $\overline{S}$ has no sibling in $B$.  Moreover, the bad neighbor of $u$ is not a sibling of the bad neighbor of $v$ and $\dist(u,v)>3$. 
\end{claim}
\begin{proof}
Clearly, by the definition of the bad set and its lonely vertices, $u$ and $v$ have no common neighbor. Let $u'$ be the bad neighbor of $u$ and $v'$ be that of $v$ in $\overline{S}$. Suppose that $u'$ has a sibling in $B$, then $B'=(B\setminus\{u\})\cup \{u'\}$  is a bad set with $\gamma(B')> \gamma (B)$, a contradiction. Similarly, we can prove $v'$ has no sibling in $B$. Hence, a father of $u$  (resp. $v$) cannot be adjacent to $v'$ (resp. $u'$). Consequently, the distance between $u$ and $v$ is at least four. \\
\noindent We still need to prove that $u'$ is not a sibling of  $v'$. Suppose that $u'$ is a sibling of  $v'$, then $B'=(B\setminus\{u,v\})\cup \{u',v'\}$  is a bad set with $\gamma(B')> \gamma (B)$, a contradiction.
\end{proof}\\

\noindent Let $L$ be the set of lonely 3-vertices of $B$ and let $N=\{x:\; x\text{ is a bad neighbor of a vertex in  } L\}$. Then, each vertex in $N$ is a 2-vertex since $x$ is the other end of the path of type $\mathcal{P}_2$ in $G[\overline{S}]$ whose first end is in $L$. Moreover, $B'=(B\setminus L )\cup N$ is a bad set. By  Claim~\ref{cl1.3}, $\gamma(B')=\gamma (B)$ and so each vertex in $N$ is a lonely vertex of $B'$. It is important to notice that the main effect of defining $B'$ is that each lonely vertex of $B'$ is a 2-vertex. 

\begin{claim}\label{cl1.4}
We have $\dist(u,v)>3$ whenever $u$ and $v$ satisfy one of the following:
\begin{enumerate}
\item  $u$ and $v$ are two sibs of $B'$ with $u$ is not the sibling of $v$. 
%\item $u$ and $v$ are two bad 2-fathers of $B'$.
\item $u$ and $v$ are two bad mid vertices in $\overline{S}$.
\item $u$ is a bad mid vertex and $v$ is a sib of $B'$.
\end{enumerate}
\end{claim}
\begin{proof}
\begin{enumerate}
\item  First, since $u$ and $v$ are not siblings and since they are not on the same maximal path, then $u$ and $v$ cannot have a common neighbor. By Claim~\ref{cl1.2}, a father of $u$ (resp. $v$) cannot be a father of the bad neighbor of $v$ (resp. $u$).  Thus, $\dist(u,v)>3$.
%\item 
%Recall that every bad 2-father is a 3-vertex. Let $u_1$ and $u_2$ be the bad neighbors of $u$ in $B'$ and $v_1$ and $v_2$ be that of $v$. By Claim 1.2, we can deduce that  $\{u_1,u_2\}\cap \{v_1,v_2\}=\emptyset$, and the  neighbor of $u$ (resp. $v$), which is distinct from $u_1$ and $u_2$ (resp. $v_1$ and $v_2$),  is not a bad vertex. Thus, a$u$ (resp $v$) has no common neighbor with a bad neighbor of $v$ (resp. $u$). Since the neighbor of $u$ (resp. $v$), which is distinct from $u_1$ and $u_2$ (resp. $v_1$ and $v_2$),  is not a bad vertex, we only still need to  prove  that $u$ and $v$ have no common neighbor. Suppose to the contrary that they have and let $t$ be this common neighbor. As $G$ is 1-saturated, then $t$ is a 2-vertex. Consequently,  $S'=(S\setminus N(t))\cup (N(u)\cup N(v))$ is an independent set with $\phi (S')>\phi (S)$, a contradiction (note that $u$, $v\in X_0(S))$.  Thus, we reach the desired result here.
\item By Claim~\ref{cl1.2}, any bad mid vertex has no bad sibling. Thus,  any two bad mid vertices have no common neighbor, and the father of $u$ (resp $v$) has no common neighbor with $v$ (resp. $u$). Then, the result follows. 
\item  From  Claim~\ref{cl1.2},  we can deduce that $u$  and $v$ have no common neighbor and  a father of $u$ (resp. $v$)  cannot be adjacent  to the bad neighbors  of $v$ (resp. $u$). Hence, $\dist(u,v)>3$. 

\end{enumerate}
\end{proof}

\begin{claim}\label{cl1.5}
Let $u$ and $v$ be two sibs of $B'$ such that $u$ is a sibling of $v$. Then either $u$ or $v$ is at distance at least four from each lonely vertex of $B'$. 
\end{claim}
\begin{proof}Let $w$ be a lonely vertex of $B'$, then by Claim~\ref{cl1.2}, a father of $u$ (resp. $v$) cannot  be a father of $w$ and cannot be a father of the bad neighbor of $w$.\\
\noindent Let $u'$ (resp. $v'$) be the bad neighbor of $u$ (resp. $v$). Suppose to the contrary that both $u$ and $v$ are not at distance greater than three from each lonely vertex, then there exist two lonely vertices $u''$ and $v''$ such that $\dist(u,u'')<4$ and $\dist(v,v'')< 4$. Thus, the father of $u''$ (resp $v''$) is a father of $u'$ (resp. $v'$). We get $B''=(B'\setminus\{u,v\})\cup \{u',v'\}$ is a bad set but $\gamma(B'')>\gamma(B')$, a contradiction. In fact, $u'$, $v'$, $u''$ and $v''$ are all sibs of $B''$.\end{proof}\\
\noindent After we determined above some distances between vertices in $S$ and $\overline{S}$,  we are ready to define successively the desired partition  $C_1,C_2,C_3,C_4$ of $V(G)$:
\begin{enumerate}
  \item $C_1$ contains every lonely vertex of $B'$, and for every two sibs $u$ and $v$ of $B'$ with $u$ is a sibling of $v$, we have $|C_1\cap \{u,v\}|=1$ such that $x$ is at distance at least four from each lonely vertex of $B'$, where $\{x\}=C_1\cap \{u,v\}$ (such an $x$ exists by Claim~\ref{cl1.5}).
\item $C_2$ contains every  bad mid vertex in $\overline{S}$ and every sib $x$ of $B'$   such that the sibling of $x$ is in $C_1$.
\item  $C_3$ contains every vertex in $\overline{S}$ but not in $C_1\cup C_2$. 
\item $C_4=S$. 
\end{enumerate}
\noindent  By Claim~\ref{cl1.3}, Claim~\ref{cl1.4} (1), Claim~\ref{cl1.5},  and by the way of choosing the sibs of $B'$ to be in $C_1$, we obtain that the distance between any two vertices in  $C_1$ is at least four.  Besides,  by Claim~\ref{cl1.4}, we get that the distance between any two vertices in  $C_2$ is at least four.
Moreover, $C_3$ is an independent set.  In fact, any two vertices in $C_3$ are either not on the same maximal path or ends of a maximal path of type $\mathcal{P}_3$ and so they are not adjacent. Thus, we reached the desired partition, a contradiction.
\end{proof}

\section{  $(1,2,\dots, 2)$-packing coloring of subcubic graphs}
%We denote by $N_2(u)=\{x:x\in N(y) \text{\;and\;} y\in N(u)\}$
Before presenting the main results of this section, we are going to introduce the following lemmas:

\begin{lemma}\label{le1}
Let $G$ be a  $1$-saturated subcubic graph such that $G$ is not $(1,2^4)$-packing colorable with the minimum number of vertices. Then, no two $2$-vertices in $G$ are adjacent.
\end{lemma}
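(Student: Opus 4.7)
I argue by contradiction. Suppose $G$ contains two adjacent $2$-vertices $u$ and $v$, and let $u'$ and $v'$ be the other neighbor of $u$ and $v$, respectively. The plan is to build a smaller $1$-saturated subcubic graph, apply the minimality of $G$ to obtain a $(1, 2^4)$-packing coloring, and extend this coloring to $G$. The natural candidate is $G' = G - \{u, v\}$, which is subcubic, and still $1$-saturated because vertex deletion never increases degrees nor creates new $3$-vertices. Since $|V(G')| < |V(G)|$ and $G$ is a minimum counter-example, $G'$ admits a $(1, 2^4)$-packing coloring $c$, and it remains to extend $c$ by assigning colors to $u$ and $v$.

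To analyze the available colors I observe that the set of vertices of $G$ at distance at most $2$ from $u$, other than $u$ and $v$, is exactly $\{u'\} \cup (N(u') \setminus \{u\}) \cup \{v'\}$, which has at most $1 + 2 + 1 = 4$ elements; symmetrically for $v$. Consequently, at most four colors in $\{2, 3, 4, 5\}$ are forbidden at $u$ by $c$, and at most four at $v$. Color $1$ is forbidden at $u$ iff $c(u') = 1$, and at $v$ iff $c(v') = 1$; moreover, $u$ and $v$ being adjacent cannot both receive color $1$, nor share any color in $\{2, 3, 4, 5\}$.

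I then case-split on the degrees of $u'$ and $v'$. If one of them is a $2$-vertex, the forbidden set at the corresponding endpoint shrinks to at most three colors in $\{2,3,4,5\}$ and the extension is immediate. So assume both $u'$ and $v'$ are $3$-vertices. If either has color $1$, then its two other neighbors take colors in $\{2, 3, 4, 5\}$ (they are adjacent to it in $G'$), so the forbidden set in $\{2, 3, 4, 5\}$ at the corresponding endpoint has at most three elements and leaves room for the extension. This reduces the delicate case to $c(u'), c(v') \in \{2, 3, 4, 5\}$, where I invoke $1$-saturation, which forces each of $u'$ and $v'$ to have at most one $3$-vertex neighbor and thereby constrains the possible color distributions on $N(u') \cup N(v')$.

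The principal obstacle is the tight configuration in which $c(u'), c(v') \in \{2, 3, 4, 5\}$ and the forbidden sets at $u$ and at $v$ together exhaust $\{2, 3, 4, 5\}$, forcing both $u$ and $v$ to require color $1$ simultaneously, which their adjacency rules out. To resolve it, I would perform a local, Kempe-style recoloring on $N(u') \cup N(v')$ that frees a color in $\{2, 3, 4, 5\}$ for one of $u, v$, after which the other can safely be colored $1$. A cleaner alternative, available when $u' \ne v'$, $u'v' \notin E(G)$, and adding this edge preserves $1$-saturation, is to apply the reduction to the modified graph $G'' = (G - \{u, v\}) + u'v'$ instead: in $G''$, the vertices $u'$ and $v'$ are adjacent, so $c(u') \ne c(v')$ in any valid coloring, and this enforced disparity shrinks the forbidden sets enough to directly extract colors for $u$ and $v$.
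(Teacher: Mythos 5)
Your reduction to $G'=G-\{u,v\}$ and the distance counting are sound, and you correctly isolate the only delicate configuration: $c(u'),c(v')\in\{2_a,\dots,2_d\}$ with all four $2$-colors blocked at both $u$ and $v$. But at exactly that point the proposal stops being a proof. ``A local, Kempe-style recoloring that frees a color'' is not an argument: Kempe exchanges are a device for proper colorings, whereas here the classes $2_a,\dots,2_d$ carry distance-$\ge 3$ constraints, so swapping colors on $N(u')\cup N(v')$ can create conflicts with vertices at distance two that you have not inspected. (The case can in fact be closed explicitly --- in the tight configuration none of the vertices at distance at most two from $u$ has color $1$, so one may recolor $u'$ with $1$, give $u$ the old color of $u'$, and give $v$ color $1$ --- but you must say this and verify it; it is the whole content of the lemma.) Your fallback reduction $G''=(G-\{u,v\})+u'v'$ fails precisely in the hard case: when $u'$ and $v'$ are both $3$-vertices, the new edge replaces the $2$-vertex neighbor $u$ (resp.\ $v$) of $u'$ (resp.\ $v'$) by a $3$-vertex neighbor, so if $u'$ already had its one permitted $3$-neighbor, $G''$ is no longer $1$-saturated. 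You flag this with ``when adding this edge preserves $1$-saturation,'' but the excluded situation is not handled anywhere.

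For comparison, the paper avoids both difficulties with a different surgery: it deletes only $v$ and adds the edge $uw$, where $w$ is the other neighbor of $v$. Since $u$ is a $2$-vertex and keeps degree $2$, the new edge cannot create an adjacency between $3$-vertices, so $1$-saturation is preserved unconditionally; and the forced adjacency of $u$ and $w$ in $G'$ guarantees they get distinct colors, which together with one explicit recoloring (the neighbor $y$ of $v$ not colored $1$ is recolored $1$, and $v$ takes the color $y$ vacated) finishes the extension. If you want to keep your two-vertex deletion, you must replace the ``Kempe-style'' sentence by the explicit recoloring above and check all the residual obstruction subcases (e.g.\ $c(u')=c(v')=1$, or a single common free $2$-color at $u$ and $v$).
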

\begin{proof}
  Suppose to the contrary, there exist two adjacent 2-vertices, say $u$ and $v$. Let $w$ be the neighbor of $v$ distinct from $u$. If $wu\in E(G)$, then let $G'$ be the graph obtained from $G$ after deleting $v$. $G'$ is a 1-saturated subcubic graph and so $G'$ has a $(1,2^4)$-packing coloring, say $c$. We will prove that $c$ can be extended to a $(1,2^4)$-packing coloring of $G$, which is a contradiction. Since $u$ and $v$ are both 2-vertices, then either color 1 is not taken by the neighbors of $v$ or there exists $i$, $i\in \{a,b,c,d\}$, such that $v$ is at distance at least three from each vertex of color $2_i$. Then either we color $v$ by 1 or by $2_i$, and so we obtain a $(1,2^4)$-packing coloring of $G$, a contradiction.

\noindent For the case $uw\notin E(G)$, let $G'$ be the graph obtained from $G$ after deleting $v$ and then adding the edge $uw$. Again, $G'$ is a 1-saturated subcubic graph, and so  $G'$ has a $(1,2^4)$-packing coloring. Clearly, one (but not both since $u$ and $w$ are adjacent in $G'$) of the neighbors of $v$ is colored by 1, since otherwise, we give $v$ the color 1,  a contradiction. Let $x$ be the neighbor of $v$ of color 1, and let $y$ be the other neighbor of $v$. Without loss of generality, suppose $y$ is of color $2_a$.
 Moreover, the colors of the three colored neighbors of $u$ and $w$ are $2_b$, $2_c$, and $2_d$, since otherwise there exists $k\in \{b,c,d\}$ such that $v$ is at distance three from each vertex of color $2_k$, and so we give $v$ the color $2_k$, a contradiction.  Hence, $v$ is at distance at least three from each vertex of color $2_a$ except $y$. Since the colors of the three colored neighbors of $u$ and $w$ are $2_b$, $2_c$, and $2_d$, then $y$ has no neighbor of color 1. Consequently, recolor $y$ by 1 and then  color  $v$ by $2_a$, and so we obtain a $(1,2^4)$-packing coloring of $G$, a contradiction.
\end{proof}

\noindent  Recall that a $(3,0)$-saturated subcubic graph is a subcubic graph on which every two heavy vertices are not adjacent.
 \begin{lemma}\label{le2}
Let $G$ be a  $(3,0)$-saturated subcubic graph such that $G$ is not $(1,2^5)$-packing colorable with the minimum number of vertices. Then, no two $2$-vertices in $G$ are adjacent.
\end{lemma}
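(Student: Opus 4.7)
The plan is to imitate the proof of Lemma~\ref{le1}, taking advantage of the extra $2$-color available in the palette $(1,2^5)$. I would suppose for contradiction that $u$ and $v$ are adjacent $2$-vertices of $G$, denote by $w$ the neighbor of $v$ other than $u$ and by $u'$ the neighbor of $u$ other than $v$, and split the argument according to whether $uw \in E(G)$.

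In Case~1 ($uw \in E(G)$), I would set $G' = G - v$. Only the degrees of $u$ and $w$ change, and they decrease; consequently no new heavy vertex is created, and no non-adjacent pair of heavy vertices becomes adjacent. Thus $G'$ is a $(3,0)$-saturated subcubic graph on one fewer vertex, and by minimality admits a $(1,2^5)$-packing coloring $c$. To extend $c$ to $v$, I observe that the ball of radius $2$ around $v$ in $G$ contains at most three other vertices ($u$, $w$, and, if $w$ is a $3$-vertex, its third neighbor), so at most three of the six colors are forbidden for $v$, and an admissible color always exists.

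In Case~2 ($uw \notin E(G)$), I would define $G' = (G - v) + uw$. The key structural point is verifying that $G'$ is $(3,0)$-saturated subcubic: degrees of $u$ and $w$ are preserved, $u$ still has degree~$2$ (so is not heavy in $G'$), the neighbor-degree multiset of $w$ is unchanged since the $2$-vertex $v$ is replaced by the $2$-vertex $u$, and no other vertex's neighborhood is altered; hence no heavy vertex of $G'$ is adjacent to another heavy vertex. By minimality, $G'$ admits a $(1,2^5)$-packing coloring $c$, and since $\dist_{G'}(x,y) \le \dist_G(x,y)$ for all $x,y \ne v$ (any $G$-path through $v$ shortens via the edge $uw$), $c$ restricts to a valid coloring of $G - v$.

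To extend $c$ to $v$, if neither $u$ nor $w$ carries color~$1$ I would assign color~$1$ to $v$; otherwise, since $u$ and $w$ are adjacent in $G'$, exactly one carries color~$1$, and by symmetry I may assume $c(u) = 1$ and $c(w) = 2_a$. The distance-$\le 2$ neighborhood of $v$ in $G$ is $\{u, w, u', w_1, w_2\}$, where $w_1, w_2$ are the remaining neighbors of $w$ (if present). Packing constraints in $G'$ prevent $u', w_1, w_2$ from receiving color $2_a$ (since $u'$ lies at $G'$-distance $2$ from $w$, while $w_1, w_2$ are adjacent to $w$), so the forbidden $2$-colors for $v$ consist of $2_a$ together with at most three colors from $\{2_b, 2_c, 2_d, 2_e\}$, leaving at least one of the five $2$-colors available for $v$. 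The main obstacle will be the saturation check in Case~2; the color-extension step is actually easier than in Lemma~\ref{le1}, since the additional $2$-color obviates the recoloring trick used there.
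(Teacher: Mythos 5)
Your proposal is correct and follows essentially the same route as the paper, which simply proceeds as in the proof of Lemma~\ref{le1} after observing that the graph $G'$ constructed there remains $(3,0)$-saturated; your verification of that saturation claim and your counting of the at most five vertices within distance two of $v$ are both sound. Your observation that the fifth color $2_e$ makes the recoloring trick of Lemma~\ref{le1} unnecessary is a minor but valid simplification of the extension step.
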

\begin{proof}
    We can proceed as in the proof of Lemma~\ref{le1}, since the above defined subgraph $G'$ is  $(3,0)$-saturated whenever $G$ is $(3,0)$-saturated.
\end{proof}
\begin{theorem}\label{thm4}
Every $1$-saturated subcubic graph is $(1,2^4)$-packing colorable.
\end{theorem}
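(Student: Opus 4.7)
The plan is to mirror the minimum-counter-example strategy of Theorem~\ref{thm1}. Assume $G$ is a $1$-saturated subcubic graph of minimum order that admits no $(1,2^4)$-packing coloring. Lemma~\ref{le1} immediately lets us assume that no two $2$-vertices of $G$ are adjacent. A short deletion/extension argument, analogous to the opening $\delta(G)\ge 2$ reduction of Theorem~\ref{thm1}, also forces $\delta(G)\ge 2$: if $G$ has a pendant vertex $u$, then $G-u$ is $1$-saturated and by minimality admits a $(1,2^4)$-packing coloring; at most three vertices lie within $G$-distance two of $u$ (the neighbor $v$ of $u$ and its at most two other neighbors), hence at most three colors are forbidden at $u$ and one of the five colors can be used to extend the coloring, a contradiction.

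Next, I would re-use the weighted maximum independent set machinery of Section~2, selecting an independent set $S$ that maximises a weight function of the same flavour as $\phi$ (possibly with retuned coefficients matching the new target sequence) and, among such sets, minimises the companion quantity~$\theta$. Color~$1$ is assigned to every vertex of~$S$. The structural analysis of $G[\overline{S}]$ recorded in Claims~\ref{cl1.1}--\ref{cl1.5} should carry over essentially verbatim: the components of $G[\overline{S}]$ are maximal paths of types $\mathcal{P}_0,\mathcal{P}_1,\mathcal{P}_2,\mathcal{P}_3$, the ends of $\mathcal{P}_3$-paths are $2$-vertices, and each bad vertex has a controlled number of bad siblings. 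Lemma~\ref{le1} strengthens this picture further by forbidding $\mathcal{P}_1$-components entirely.

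The second phase is to partition $\overline{S}$ into four color classes $C_1,C_2,C_3,C_4$, one per color of the palette $\{2_a,2_b,2_c,2_d\}$, each forming a distance-$3$ set in $G$. Equivalently, the aim is to properly colour the auxiliary graph $H$ on $\overline{S}$ in which $uv\in E(H)$ iff $\dist_G(u,v)\le 2$, that is, iff $u$ and $v$ lie on a common maximal path of $G[\overline{S}]$ or share a father in $S$. Using the sibling and distance bounds from Claims~\ref{cl1.2}--\ref{cl1.5}, one should assign to each maximal path of $G[\overline{S}]$ a local $4$-colouring of its vertices, and then reconcile those local choices across shared fathers by an exchange argument in the spirit of Claim~\ref{cl1.5}.

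The hard part will be the bookkeeping needed to guarantee that enough of the four colors remain available at each vertex of $\overline{S}$: the bottleneck is a $3$-vertex of $\overline{S}$ whose (up to three) fathers in $S$ jointly generate many siblings, and ruling this situation out requires a sequence of exchange arguments $S\mapsto (S\setminus A)\cup B$ in the spirit of Claim~\ref{cl1.2}, retuned to preserve the new weight function. Once this local-availability bound is established, the existence of the desired partition of $\overline{S}$ reduces to a finite case analysis on the local configurations around each maximal path, yielding the contradiction.
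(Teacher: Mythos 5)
Your proposal is not a proof but a plan, and the step you yourself flag as ``the hard part'' --- guaranteeing that each vertex of $\overline{S}$ has at most three forbidden colors among $\{2_a,2_b,2_c,2_d\}$ --- is exactly where the approach breaks down and is left undone. If $S$ is a maximum weighted independent set in the style of Theorem~\ref{thm1}, a $3$-vertex $u\in\overline{S}$ can have two or three fathers in $S$, each contributing further children, plus a path-neighbour in $G[\overline{S}]$; the number of vertices of $\overline{S}$ within distance two of $u$ can then exceed three, so no greedy or local four-colouring of the auxiliary graph $H$ is available without a substantial new exchange analysis that you only gesture at (``retuned to preserve the new weight function''). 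Claims~\ref{cl1.2}--\ref{cl1.5} do not transfer: they control \emph{bad} vertices and their siblings for the purpose of building two distance-$4$ classes alongside two independent classes, which is a different target from four distance-$3$ classes alongside one independent class.

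The paper's actual proof shows that none of this machinery is needed. Take as the colour-$1$ class simply the set of \emph{all} $2$-vertices: by Lemma~\ref{le1} it is independent. The remaining vertices are $3$-vertices, and $1$-saturation gives that each $3$-vertex has at most one $3$-vertex neighbour; counting second neighbours (the $3$-vertex neighbour, if any, has only $2$-vertex further neighbours) shows every $3$-vertex has at most three other $3$-vertices within distance two. A greedy assignment of $2_a,2_b,2_c,2_d$ to the $3$-vertices therefore succeeds immediately. So the correct choice of colour-$1$ class is not a maximum weighted independent set at all, and once that observation is made the theorem is a two-line greedy argument. Your opening reductions (Lemma~\ref{le1} and $\delta(G)\ge 2$) are fine but the latter is not even needed.
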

\begin{proof}
On the contrary, suppose that $G$ is a counter-example with the minimum number of vertices.  
By Lemma~\ref{le1}, any two 2-vertices in $G$ are not adjacent.
 Thus, the set of the 2-vertices in $G$ is independent. Moreover, since $G$ is 1-saturated, then each 3-vertex  is at distance at least three from each other 3-vertex but at most three.  Thus, color each 2-vertex by 1, then color greedily the 3-vertices  by the colors $2_a$, $2_b$, $2_c$ and $2_d$ in such a way any two vertices receiving the same color $2_i$ are at distance at least three from each other. Hence, we obtain a $(1,2^4)$-packing coloring of $G$, a contradiction.
\end{proof}

Observe that the above result is tight in the sense that the graph depicted on Figure~\ref{Fig:1sat} is subcubic and $1$-saturated and not $(1,2^3)$-packing colorable.

\begin{figure}[h]
\begin{center}
\begin{tikzpicture}[scale=1.3]
\node at (0,0) (a) [circle,draw=black,fill=none,scale=0.7]{};% \node at (7.4,5.2) {\small$1$};
\node at (1,0) (b)[circle,draw=black,fill=none,scale=0.7]{}; %\node at (5.9,3) {\small$2_2$};
\node at (2,0) (c) [circle,draw=black,fill=none,scale=0.7]{}; %\node at (6.8,3) {\small$1$};
\node at (2,1) (d) [circle,draw=black,fill=none,scale=0.7]{}; %\node at (7.9,3) {\small$2_3$};
\node at (2,2) (e) [circle,draw=black,fill=none,scale=0.7]{}; %\node at (8.9,3) {\small$2_4$};
\node at (1,2) (f)[circle,draw=black,fill=none,scale=0.7]{}; %\node at (9.8,3) {\small$1$};
\node at (0,2) (g) [circle,draw=black,fill=none,scale=0.7]{}; %\node at (6.8,1.3) {\small$2_4$};
\node at (0,1) (h) [circle,draw=black,fill=none,scale=0.7]{}; %\node at (5.8,1.3) {\small$1$};

\draw (e) -- (a) -- (b) -- (c) -- (d) -- (e) -- (f) -- (g) -- (h) -- (a);
\draw (c) -- (g);
\end{tikzpicture}
\end{center}
\caption{A $1$-saturated non $(1,2^3)$-packing coloring subcubic graph.}
\label{Fig:1sat}
\end{figure}
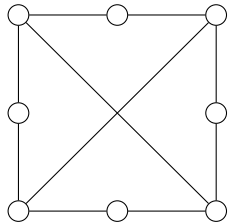

%In the proof of the following result, we nee to use the \emph{level ordering} of a graph with respest to one of its vertices.  For a graph $G$ and a vertex $v\in V(G)$, a \emph{level ordering} $(G,v)$ is a partition of $V(G)$ into levels $L_i=\{x \in V(G):d(x,v)=i\}$, $0 \leq i \leq l(v)$, with $l(v)=\max(\{d(x,v),\; x \in V (G)\})$.
 %It is clear that each vertex $x$ in $L_i$ can be reached from $v$ by a path $P$ of length $i$. The predecessor of $x$ on $P$ will be called a \emph{parent} of $x$.
\begin{theorem}\label{thm5}
    Every $(3,0)$-saturated subcubic graph is $(1,2^5)$-packing colorable.
\end{theorem}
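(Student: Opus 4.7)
My plan is by contradiction: let $G$ be a $(3,0)$-saturated subcubic graph with the minimum number of vertices that fails to be $(1,2^5)$-packing colorable. Standard deletion arguments yield $\delta(G)\ge 2$, and by Lemma~\ref{le2} no two $2$-vertices of $G$ are adjacent.

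The first observation is that the union $I_0$ of all $2$-vertices and all heavy $3$-vertices is an independent set of $G$: the $2$-vertices are pairwise non-adjacent by Lemma~\ref{le2}, the heavy $3$-vertices are pairwise non-adjacent by the $(3,0)$-saturation hypothesis, and a heavy $3$-vertex has only $3$-vertex neighbors and is therefore non-adjacent to every $2$-vertex. The strategy is to place $I_0$ (with a small local modification in one case) into the color-$1$ class and to color the remaining non-heavy $3$-vertices with the five colors $2_a,\ldots,2_e$ so that same-colored vertices lie at distance at least $3$.

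To this end I would introduce the auxiliary graph $H^{\star}$ whose vertices are the non-heavy $3$-vertices of $G$ and whose edges connect the pairs at distance at most $2$ in $G$. A local counting argument gives $\Delta(H^{\star})\le 5$: if a non-heavy $3$-vertex $v$ has $k_h$ heavy neighbors, $k_n$ non-heavy $3$-vertex neighbors and $3-k_h-k_n$ two-vertex neighbors (with $k_h+k_n\le 2$ by non-heaviness), then each heavy neighbor contributes at most two non-heavy $3$-vertex $2$nd-neighbors (since every neighbor of a heavy vertex is a non-heavy $3$-vertex by $(3,0)$-saturation), while each non-heavy $3$-vertex or $2$-vertex neighbor contributes at most one, so $d_{H^{\star}}(v)\le k_h+k_n+3\le 5$. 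By Brooks' theorem, whenever no connected component of $H^{\star}$ is isomorphic to $K_6$, the graph $H^{\star}$ is properly $5$-colorable; combining such a coloring with color $1$ on $I_0$ would then yield a $(1,2^5)$-packing coloring of $G$, contradicting the choice of $G$.

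The main obstacle is the remaining case in which $H^{\star}$ has a $K_6$ component $K=\{v_1,\ldots,v_6\}$. The degree equality $d_{H^{\star}}(v_i)=5$ combined with the $(3,0)$-saturation constraints should force each $v_i$ to have exactly two $3$-vertex neighbors and one $2$-vertex neighbor, and a case analysis on the $2$-regular subgraph induced on $K$ should show that $G[K]$ must be a $6$-cycle whose antipodal vertices share a common $2$-vertex neighbor (the $2C_3$ alternative being ruled out because the nine needed cross pair common neighbors cannot be realized within the available degree budget). In that configuration one picks the alternating independent set $J=\{v_1,v_3,v_5\}$ of $K$, swaps $J$ into color $1$ at the cost of moving the three shared $2$-vertices out of $I_0$ (independence is preserved since $v_1,v_3,v_5$ have no other $2$-vertex or heavy neighbor), colors those three displaced $2$-vertices with a single color $2_a$ (they are pairwise at distance exactly $3$), and colors $v_2,v_4,v_6$ with three distinct colors from $\{2_b,2_c,2_d\}$; the remaining color $2_e$ is then free for the rest of $H^{\star}$, which is $K_6$-free and thus $5$-colorable by Brooks' theorem. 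The principal technical difficulty will be to verify that such local swaps are mutually compatible when several $K_6$ components coexist, so that no distance-$2$ conflict is created between the displaced $2$-vertices of different components.
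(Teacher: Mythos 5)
Your overall strategy coincides with the paper's: put the $2$-vertices and heavy vertices (your $I_0$, the paper's $X$) into the color-$1$ class, build the auxiliary graph on the non-heavy $3$-vertices with edges for distance at most two, bound its maximum degree by $5$, and invoke Brooks' theorem to reduce to the case of a $K_6$ component. Your degree count $d_{H^\star}(v)\le k_h+k_n+3\le 5$ is correct (and nicely sidesteps the paper's Claim~4.1, which the paper proves separately), and your deduction that a vertex of the $K_6$ has exactly one $2$-vertex neighbor and two $3$-vertex neighbors is also right.

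The genuine gap is in the structural analysis of the $K_6$. You assert that $K$ induces a $2$-regular subgraph of $G$ and then split into the $C_6$ and $2C_3$ alternatives, but this does not follow: the two $3$-vertex neighbors of a vertex $v_i\in K$ need not lie in $K$ at all, since they may be \emph{heavy} vertices, which are excluded from $H^\star$. Adjacency of $v_i$ to the other five vertices of $K$ in $H^\star$ can then be realized entirely through second neighbors. This is not a vacuous worry: there is a realizable $(3,0)$-saturated configuration (the right-hand graph of Figure~\ref{Fig1}) in which $K$ is an independent set of $G$, with $x_1$ adjacent to two heavy vertices $y_2,y_3$ and one $2$-vertex $y_1$, and the pairwise distance-$2$ relations among $x_2,\dots,x_6$ forced through further common neighbors. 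Your case analysis never reaches this configuration, nor the intermediate case where some vertices of $K$ have exactly one $G$-neighbor in $K$ (which must be shown to be impossible). Separately, your concern about reconciling swaps across several coexisting $K_6$ components is moot: in each realizable configuration every vertex involved already has all its $G$-neighbors accounted for, so by connectivity $G$ \emph{is} one of two specific small graphs, and one concludes simply by exhibiting a $(1,2^4)$-packing coloring of each rather than by a local recoloring argument.
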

\begin{proof}
  On the contrary, suppose that $G$ is a counter-example with the minimum order $n$.  Clearly, $G$ is connected. We have the following result concerning the neighbors of a 3-vertex:
  \begin{claim}\label{cl3.1}
      Every 3-vertex is adjacent to at most one 2-vertex.
  \end{claim}
     \begin{proof} Suppose, to the contrary, that there exists a 3-vertex having two neighbors of degree two, and let $x$ be one of these two neighbors. By Lemma~\ref{le2}, every neighbor of $x$ is a 3-vertex. Let $u$ and $v$ be the two neighbors of $x$ such that $u$ has a neighbor of degree two distinct from $x$. Let $G'=G\setminus \{x\}$, then, by the minimality of the order of $G$, $G'$ has a $(1,2^5)$-packing coloring. First, $u$ and $v$ are not adjacent since otherwise there exists $j$, $1\leq j\leq 5$, such that $x$ is at distance at least three from each vertex of color $2_j$, then give $x$ the color $2_j$ and so we obtain a  $(1,2^5)$-packing coloring of $G$, a contradiction. If both neighbors of $x$ are colored by 1, then there exists $j$, $1\leq j\leq 5$, such that $x$ is at distance at least three from each vertex of color $2_j$, then color $x$ by $2_j$, and so we obtain a  $(1,2^5)$-packing coloring of $G$, a contradiction.  Suppose now that  $x$ has only one neighbor of color 1 and let us call this neighbor $y$. Consequently, the neighbor of $x$, which is distinct from $y$, has a neighbor of color 1, since otherwise we recolor it by 1 and then proceed as in the previous case, when both neighbors of $x$ are of color 1,  to get a contradiction.  Thus,  there exists $j$, $1\leq j\leq 5$, such that $x$ is at distance at least three from each vertex of color $2_j$, then color $x$ by $2_j$, a contradiction.  Moreover, if $x$ has no neighbor of color 1 and if the two neighbors of $x$ are of distinct colors, then color $x$ by 1, a contradiction. We are left with the case when $u$ and $v$ have the same color $2_j$ for some $j$, $1\leq j\leq 5$. The problem here is that $u$ and $v$ are at distance two from each other in $G$, but both are colored by $2_j$. If $u$  has no neighbor of color 1, then recolor $u$  by 1 and so we proceed as before in order to get a contradiction. Similarly, $v$ has a neighbor of color 1. Let $u_1$ and $u_2$ be the two neighbors of $u$ distinct from $x$, and suppose $u_1$ is the one of color 1. Recall that $u$ has a neighbor of degree two distinct from $x$. If both $u_1$ and $u_2$ are of color 1, then there exist $i$, $1\leq i\neq  j\leq 5$, such that $u$ is at distance at least three from each vertex of color $2_i$ and so we recolor $u$ by $2_i$ and color $x$ by 1,  a contradiction. Hence, we can deduce $u_2$ is not of color 1 and it has a neighbor of color 1, since otherwise we recolor $u_2$ by 1 and we proceed as before to reach a contradiction. 
     But again, there exists $i$, $1\leq i\neq j\leq 5$, such that $u$ is at distance at least three from each vertex of color $2_i$. Thus, we can recolor $u$ by $2_i$ and color $x$ by 1,  and so we obtain a  $(1,2^5)$-packing coloring of $G$, a contradiction.
      \end{proof}
      
  %Let $v$ be a 2-vertex of $G$ and consider the level ordering $(G,v)$. We are going to prove that the $G$ is $(1, 2^5)$-packing colorable.  The vertices will be colored one by one, from level $l(v)$ to 1 in such a way that all heavy vertices are colored by 1, the  2-vertices are colored by $2_5$, and as much as possible of the remaining vertices are colored by 1. We will color first the vertices in $L_{l(v)}$. Each vertex in $G[L_{l(v)}]$ is of degree at most two. Let $x$ be a non heavy 3-vertex. We will start with the case when x is in $L_{l(v)}$. If $x$ has no neighbor of color 1, then color $x$ by 1. Otherwise, there exists $i$, $1\leq i\leq 4$, such that $x$ is at distance at least three from each vertex of color $2_i$, then color $x$ by $2_i$. We are left with the case $x\in L_i$, $1<i$. 

 \noindent Let $X$ be the set of heavy vertices and 2-vertices in $G$ and let $\overline{X}=V(G)\setminus X$. Since $G$ is $(3,0)$-saturated and by Lemma~\ref{le2}, $X$ is an independent set. Let $G'$ be the graph whose set of vertices is   $\overline{X}$ such that two vertices $x$ and $y$ are adjacent in $G'$ if and only if the distance between  $x$ and $y$ is at most two in $G$. Remark that if we prove the vertices of $G'$ can be colored properly by five colors, then the vertices of  $\overline{X}$ can be colored by $2_1,\dots,2_5$, and so if we give then the vertices of $X$ the color 1, we get a $(1,2^5)$-coloring of $G$, a contradiction. \\
 \noindent Hence $G'$ cannot be colored properly by five colors. However, it is easy to notice that each non-heavy vertex is at distance at least three from each other non-heavy vertex in $G$  but at most five. Consequently, $\Delta(G')\leq 5$. Since $G'$ cannot be colored properly by five colors, then, by Brook's theorem, $G'$ has a complete subgraph on six vertices, say $K$. Let $\{x_1, \dots, x_6\}$ be the vertices of $K$. \\
 \noindent We will study first the case when there exists a vertex in $K$, say $x$, such that $x$ is adjacent in $G$ to two vertices in $\{x_1,\dots,x_6\}$. Without loss of generality, suppose $x=x_1$ and suppose $x_2$ and $x_3$ are neighbors of $x_1$ in $G$, and so the other vertices of $K$  are second neighbors in $G$ of $x_1$. Let $y_1$ (resp. $y_2$ and $y_3$) be the neighbor of $x_1$ (resp. $x_2$ and $x_3$) of degree two in $G$. Clearly $y_1$ is adjacent to one of $x_4$, $x_5$ and $x_6$. Without loss of generality, suppose it is  $x_4$, and then suppose $x_5$ (resp. $x_6$) is adjacent to $x_2$ (resp. $x_3$). %We define the set $N=\{x_1,\dots, x_6\}\cup\{y_1,\dots, y_3\}$.\\
  %We will prove first that $x_4$ cannot have a neighbor outside $N$. Suppose to the contrary that $x_4$ has a neighbor $u$ outside $N$. Note that any neighbor of $x_4$ distinct from $y_1$ is a 3-vertex. Consequently $u$ could not be adjacent to 
 \noindent The only way for $x_4$ to be at distance less than three from $x_2$ (resp. $x_3$) is to have a common neighbor with $x_2$ (resp. $x_3$). In fact,  $x_2$ (resp. $x_3$) has three  neighbors  which are $x_1$,  $x_5$ and $y_2$ (resp. $x_1$,  $x_6$ and $y_3$). % and so $x_2$ (resp. $x_3$) could not be adjacent to $x_4$. 
 Thus, the only possible common neighbor of $x_4$ and $x_2$ (resp. $x_4$ and $x_3$) is $x_5$ (resp. $x_6$), since by  Claim~\ref{cl3.1}, $x_4$ has only one neighbor of degree two which is $y_1$. However, the only way now for $x_2$ and $x_6$ (resp. $x_3$ and $x_5$) to be at distance at most two in $G$ is by having a common neighbor since each of them is non-heavy and already adjacent to two 3-vertices. Therefore,  the only possible common neighbor of $x_2$ and $x_6$ (resp. $x_3$ and $x_5$) is $y_2$ (resp. $y_3$). Hence, $G$ is the graph on the left of Figure~\ref{Fig1} which is $(1,2^5)$-packing colorable (even $(1,2^4)$-packing colorable), a contradiction.\\
 \noindent Thus, every vertex in $K$ is adjacent in $G$ to at most one vertex in $K$. Suppose there exists a vertex in $K$, say $x$, such that $x$ is adjacent in $G$ to a vertex in $\{x_1,\dots,x_6\}$. Without loss of generality, suppose $x=x_1$ and suppose $x_2$  is a neighbor of $x_1$ in $G$, and so the other vertices of $K$  are second neighbors in $G$ of $x_1$. Since every vertex in $K$ is adjacent in $G$ to at most one vertex in $K$, then none of the neighbors of $x_2$ in $G$ is in $\{x_3,\dots,x_6\}$. But $x_1$ is adjacent to a 2-vertex, then there exists $k$, $3\leq k\leq 6$, such that $x_k$ is not a second neighbor of $x_1$ in $G$, a contradiction.\\
 \noindent Consequently, for every two vertices $x$ and $y$ in $K$, $x$ and $y$ are not adjacent in $G$. Let $y_1$, $y_2$ and $y_3$ be the neighbors of $x_1$ in $G$ such that $y_1$ is a 2-vertex. Remark that both $y_2$ and $y_3$ are heavy vertices, since $\Delta(G')\leq 5$ and $x_1$ has five neighbors in $G'$ distinct from $y_2$ and $y_3$. Without loss of generality, suppose $x_2$ is a neighbor of $y_1$, $x_3$ and $x_4$ are both neighbors of $y_2$, and $x_5$ and $x_6$ are both neighbors of $y_3$ in $G$.   Since the only way for every two vertices in $\{x_2,\dots, x_6\}$  to be at distance less than three from each other in $G$ is to have a common neighbor, we get that $G$ is the graph on the right of Figure~\ref{Fig1}, which is $(1,2^5)$-packing colorable (even $(1,2^4)$-packing colorable), a contradiction.
\end{proof}\\

\begin{figure}[h]
\begin{center}
\begin{tikzpicture}[scale=1.3]
\node at (1,5) (a) [circle,draw=black,fill=none,scale=0.7]{$x_1$};
\node at (.6,5) {\small$1$};
\node at (2.75,5) (b) [circle,draw=black,fill=none,scale=0.7]{$x_3$};
\node at (3.2,5) {\small$2_1$};
\node at (3.5,3.5) (c)[circle,draw=black,fill=none,scale=0.7]{$x_6$};
\node at (3.9,3.5) {\small$1$};
\node at (2.75,2) (d) [circle,draw=black,fill=none,scale=0.7]{$x_4$};
\node at (3.2,2) {\small$2_2$};
\node at (1,2) (e) [circle,draw=black,fill=none,scale=0.7]{$x_5$};
\node at (.6,2) {\small$1$};
\node at (.25,3.5) (f)[circle,draw=black,fill=none,scale=0.7]{$x_2$};
\node at (-.2,3.5) {\small$2_3$};
\node at (2.5,3.7) (g) [circle,draw=black,fill=none,scale=0.7]{$y_1$};\node at (2.5,4.1) {\small$2_4$};
\node at (1.3,3.7) (h) [circle,draw=black,fill=none,scale=0.7]{$y_3$};\node at (1.3,4.1) {\small$2_4$};
\node at (1.9,2.8) (i) [circle,draw=black,fill=none,scale=0.7]{$y_2$};\node at (1.9,2.4) {\small$2_4$};
\draw (a) -- (b) -- (c) -- (d) -- (e) -- (f) -- (a);
\draw (a) -- (g) -- (d);
\draw (b) -- (h) -- (e);
\draw (c) -- (i) -- (f);

\node at (7,5.2) (a) [circle,draw=black,fill=none,scale=0.7]{$x_1$}; \node at (7.4,5.2) {\small$1$};
\node at (5.5,4.1) (g) [circle,draw=black,fill=none,scale=0.7]{$y_1$};\node at (5.9,4) {\small$2_1$};
\node at (7,4.1) (h) [circle,draw=black,fill=none,scale=0.7]{$y_2$};\node at (7.4,4) {\small$2_2$};
\node at (9,4.1) (i) [circle,draw=black,fill=none,scale=0.7]{$y_3$};\node at (9.4,4) {\small$2_3$};
\node at (5.5,2.8) (b)[circle,draw=black,fill=none,scale=0.7]{$x_2$}; \node at (5.9,3) {\small$2_2$};
\node at (6.5,2.8) (c) [circle,draw=black,fill=none,scale=0.7]{$x_3$}; \node at (6.8,3) {\small$1$};
\node at (7.5,2.8) (d) [circle,draw=black,fill=none,scale=0.7]{$x_4$}; \node at (7.9,3) {\small$2_3$};
\node at (8.5,2.8) (e) [circle,draw=black,fill=none,scale=0.7]{$x_5$}; \node at (8.9,3) {\small$2_4$};
\node at (9.5,2.8) (f)[circle,draw=black,fill=none,scale=0.7]{$x_6$}; \node at (9.8,3) {\small$1$};
\node at (7,1.5) (j) [circle,draw=black,fill=none,scale=0.7]{}; \node at (6.8,1.3) {\small$2_4$};
\node at (6,1.5) (k) [circle,draw=black,fill=none,scale=0.7]{}; \node at (5.8,1.3) {\small$1$};
\node at (8,1.5) (l) [circle,draw=black,fill=none,scale=0.7]{}; \node at (7.8,1.3) {\small$2_1$};
\node at (9,1.5) (m)[circle,draw=black,fill=none,scale=0.7]{}; \node at (8.8,1.3) {\small$2_1$};
\draw (g) -- (a) -- (h);
\draw (a) -- (i);
\draw (g) -- (b);
\draw (c) -- (h) -- (d);
\draw (f) -- (i) -- (e);
\draw (b) -- (j) -- (c); \draw (j) -- (f);
\draw (b) -- (k) -- (d); \draw (k) -- (e);
\draw (c) -- (l) -- (e);
\draw (d) -- (m) -- (f);
\end{tikzpicture}
\end{center}
\caption{Two configurations of 6 non-heavy vertices $x_i$ at pairwise distance at most two in a $(3,0)$-saturated subcubic graph, along with a $(1,2^4)$-packing coloring for each.}
\label{Fig1}
\end{figure}
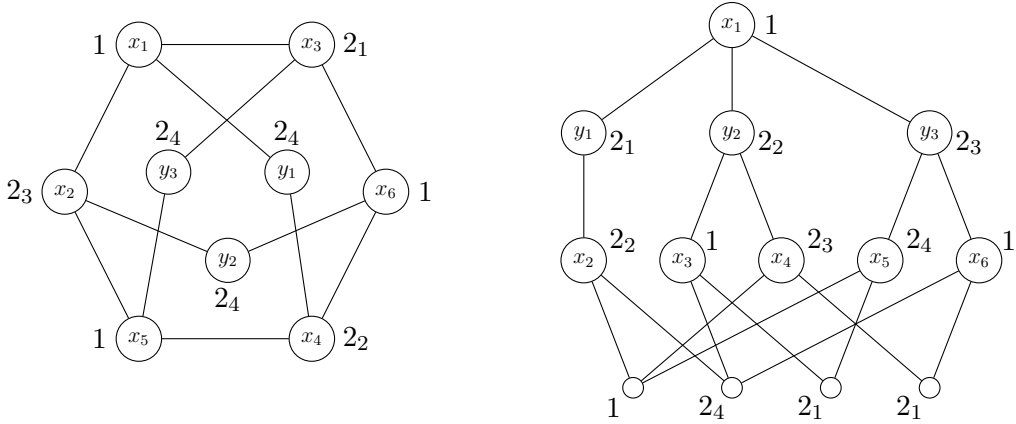

\noindent Remark that both graphs  of Figure~\ref{Fig1} are not $(1,2^3)$-packing colorable. For the one on the left, we can observe that the six 3-vertices are at pairwise distance at most two and that the three 2-vertices are at distance at most 2 of any 3-vertex. Hence, in order to define a $(1,2^3)$-packing coloring, one must give color 1 to half of the six 3-vertices. But then, it can be seen that each 2-vertex is adjacent to a vertex of color 1, and thus it can be colored neither by color 1 nor by a color 2. For the graph on the right, this was confirmed by a computer exhaustive search.

%\section{Toward solving Bresar Conjecture for subcubic graphs with girth at least six}
%\begin{proposition}
 %   If every 2-saturated subcubic graph with girth at least 
%\end{proposition}

\section{Concluding Remarks}
\noindent Remark that the result of Theorem~\ref{thm1} is maybe not tight since we were only able to find a 1-saturated subcubic graph that is not $(1,1,4,4)$-packing colorable. The graph on the left of Figure~\ref{fig2} has this property. Actually, it can be observed that, while the diameter of this graph is 5, the distance between two vertices lying in a triangle is at most 4. Hence, it is impossible to complete the coloring of the three triangles with only two colors 4. \\

%$(1,1,3)$-packing colorable. The subcubic graph $G$, which is formed of two triangles on which only one vertex of the first triangle is adjacent to only one vertex of the other triangle, is a 1-saturated graph which is neither $(1,2,2)$-packing colorable nor $(1,1,3)$-packing colorable. Besides, concerning the 1-saturated triangle-free subcubic graphs, we found that the graph which is formed of three non-intersecting cycles of order five, on which exactly one vertex in each cycle is adjacent to a vertex in another cycle is neither $(1,2,2)$-packing colorable nor $(1,1,3)$-packing colorable.
\noindent Thus, we propose the following problem.

\noindent \textbf{Open problem }: Is it possible to use the method of the proof of Theorem~\ref{thm1} for proving that 1-saturated subcubic graphs are $(1,1,3,4)$-packing colorable? And if yes, what values of $\alpha$ and $\beta$  in $|X_1(T)|+\alpha |X_0(T)|+\beta |Y(T)|$ must be used?\\

\end{document}